\newtheorem{theorem}{Theorem}[section]
\newtheorem{lemma}[theorem]{Lemma}
\newenvironment{proof}[1][Proof]{\begin{trivlist}
\item[\hskip \labelsep {\bfseries #1}]}{\end{trivlist}}
\newenvironment{definition}[1][Definition]{\begin{trivlist}
\item[\hskip \labelsep {\bfseries #1}]}{\end{trivlist}}
\title{A Constructive Proof of Jacobi's Identity for the Sum of Two Squares}
\author{Mario DeFranco}
\begin{document}
\maketitle

\abstract{We present a constructive proof of Jacobi's identity for the sum of two squares. We present a combinatorial proof of the Jacobi Triple Product and combine with a proof of Hirschhorn to define an algorithm. The input is a factorization $n=dN$ with $d \equiv1\mod 4$ plus two bits of data, and whose output is either another factorization $n=d'N'$ and $d' \equiv3\mod 4$ with two more bits of data, or a pair of integers whose squares sum to $n$. We phrase this algorithm in terms of integer partitions and matchings on an infinite graph.} 

\section{Introduction}

Jacobi's identity for the sum of two squares \cite{Jacobi} states that for positive integer $n$
\begin{equation} \label{Jacobi}
r_2(n) = 4(d_1(n) - d_3(n)),
\end{equation}
where $r_2(n)$ is the number of ordered pairs of integers $(m_1, m_2)$ such that 
\[
m_1^2+m_2^2 = n
\]
and
\begin{align*}
d_1(n) &= \text{the number of divisors of $n$ congruent to 1 mod 4}\\ 
d_3(n) &= \text{the number of divisors of $n$ congruent to 3 mod 4}.
\end{align*}
This identity is equivalent to the Lambert series 
\begin{equation}\label{Lambert}
\theta_3(q)^2 = 1+4\sum_{n=1}^\infty (\frac{q^{4n-3}}{1-q^{4n-3}} - \frac{q^{4n-1}}{1-q^{4n-1}})
\end{equation}
where 
\[
\theta_3(q) = \sum_{n=-\infty}^\infty q^{n^2}
\]
and where we expand the rational functions as power series 
\[
\frac{x}{1-x} = x \sum_{m=0}^\infty x^m.
\]
There exist combinatorial proofs of \eqref{Lambert} in the literature; see Borwein and Borwein \cite{Borwein} section 9.1 and M. D. Hirschhorn \cite{Hirschhorn}. These proofs compare the coefficients of both sides of \eqref{Lambert} and show they are equal; in other words, they show that the orders of two finite sets are the same. The proof we present here proves \eqref{Lambert} by explicitly defining a bijection between those sets. Specifically, we denote for integer $n \geq0$ the sets
\begin{align}
\mathrm{SquarePairs}(n) &= \{ (m_1,m_2) \in \mathbb{Z}^2: m_1^2 + m_2^2 = n\} \nonumber \\ 
\mathrm{Factors1mod4}(n)  &= \{ (d,N; \epsilon_1, \epsilon_2): \epsilon_1,\epsilon_2 \in \{ 0,1\}; d,N \in \mathbb{N}, d \equiv 1 \text{ mod } 4  \text{ and } dN=n\} \nonumber \\
\mathrm{Factors3mod4}(n)  &= \{ (d,N; \epsilon_1, \epsilon_2): \epsilon_1,\epsilon_2 \in \{ 0,1\}; d,N \in \mathbb{N}, d \equiv 3 \text{ mod } 4  \text{ and } dN=n\}. \label{sets}
\end{align}
We define a bijection $P$
\[
P: \mathrm{Factors1mod4}(n) \rightarrow \mathrm{SquarePairs}(n)  \cup \mathrm{Factors3mod4}(n).
\]
To define $P$, we first present a combinatorial proof of the Jacobi Triple Product using integer partitions. The proof of \eqref{Lambert} by Hirschhorn \cite{Hirschhorn} uses the Jacobi Triple Product as a starting point. We then define three graphs whose vertices are indexed by lists of partitions that arise from the proof of the Jacobi Triple Product. These graphs codifies the proof strategy of \cite{Hirschhorn}. We show that the sets \eqref{sets} correspond to subsets of the final graph $G''$, and construct paths such that each path that begins at a vertex in $\mathrm{Factors1mod4}(n)$ ends at a vertex in $\mathrm{SquarePairs}(n) \cup \mathrm{Factors3mod4}(n)$. 

We now establish notation for partitions and graphs. An integer partition, or just partition, is a finite multi-set of positive integers. Each element of this multi-set is called a part. We denote a partition $\lambda$ by the notation
\[
\lambda = \prod_{i=1}^\infty (q^i)^{\lambda(i)}
\]
where $\lambda(i)$ is the number of parts of $\lambda$ that are equal to $i$ and only finitely many of $\lambda(i)$ are non-zero. If each non-zero $\lambda(i)=1$, then the multi-set is actually a set and we say that the partition has distinct parts. We let 
\[
\lambda(q) = q^{\sum_{i=1}^\infty i\lambda(i)}
\]
denote that monomial $\mathbb{Z}[q]$. 
We denote
\[
\Sigma(\lambda) = \sum_{i=1}^\infty i\lambda(i)
\]
and 
\[
|\lambda| = \sum_{i=1}^\infty \lambda(i).
\]
For a partition $\alpha$ with distinct parts, we will also denote the partition as an ordered finite sequence 
\[
\{ a_1, a_2, ..., a_l\}
\]
such that 
\[
0<a_i < a_{i+1}.
\]

A graph $G$ is a pair $(V,E)$, where $V$ is a (possibly infinite) set and $E$ is a set of distinct unordered pairs $\{v_1,v_2\}$ for $v_1,v_2 \in V$. We call $V$ the vertex set or just vertices of $G$ and $E$ the edges of $G$. For a graph $G$, we say that a matching $\mathcal{M}$ of $G$ is a subset of the edges of $G$ such that each vertex of $G$ is in at most one edge of $\mathcal{M}$. A perfect matching is a matching such that each vertex is in exactly one edge. 

\section{Proof of Jacobi Triple Product}\label{Triple Product}
Jacobi Triple Product: 

\[
\prod_{m=1}^\infty (1-q^{2m})(1+z q^{2m-1})(1+z^{-1} q^{2m-1}) = \sum_{n=-\infty}^\infty z^n q^{n^2}
\]

\begin{proof}
Divide by $$ \prod_{m=1}^\infty (1-q^{2m})$$ and expand each factor as a power series to obtain 
\begin{equation} \label{expanded}
\prod_{m=1}^\infty (1+z q^{2m-1})(1+z^{-1} q^{2m-1}) = \sum_{n=-\infty}^\infty z^n q^{n^2}(\prod_{j=1}^\infty \sum_{k=0}^\infty (q^{2j})^k). 
\end{equation}

Expanding the terms on the left, the coefficient of $z^n$ will be a sum of terms which are clearly indexed by an ordered pair $(A,B)$, where $A$ and $B$ are each a finite (possibly empty) set of 
odd positive integers, though $A \cap B$ may be non-empty, and $|A|-|B| =n$. To prove \eqref{expanded} we thus give a bijection
\[
(A,B) \mapsto (\lambda(A,B),n)
\]
where $\lambda(A,B)$ denotes a finite (possibly empty) set of positive even integers; that is $\lambda(A,B)$ is a partition with distinct even parts.  
We consider the case $|A|\geq |B|$. For if $|A|<|B|$, we set 
\[
(A,B) \mapsto (\lambda(B,A),|A|-|B|).
\] 
Let $A = (a_1, a_2, ..., a_l)$ and $B = (b_1, b_2, ..., b_{l-n})$, with $a_i<a_{i+1}$ and $b_i<b_{i+1}$.  Thus $\lambda(A,B)$ must be a partition such that 
\[
n^2+\Sigma(\lambda(A,B)) = \Sigma(A)+\Sigma(B).
\]
where $n = |A|-|B|$.

We define
\begin{equation}\label{bijection}
\lambda(A,B)= \left(\prod_{j=1}^{l} (q^{2(l-j+1)})^{\frac{a_{j}-a_{j-1}-2}{2}}\right)\left(\prod_{j=1}^{l-n} (q^{2l+b_j-(2j-1)})^1\right),
 \end{equation}
 where each factor $(q^{j})^i$ contributes $i$ parts of the number $j$ in the partition $\lambda(A,B)$ of $N$, and $a_0=-1$. 
 
 To see that the map results in a partition of $\Sigma(A)+\Sigma(B) - n^2$, we consider the pair $(A,B)$ with minimal entries, which is 
 \[
 A = (1,3,5,..., 2l-1), \, \, \, \, \, \, B= (1, 3, ..., 2(l-n)-1)
 \]
 Then $\Sigma(A)+\Sigma(B) = n^2+2l(l-n)$ and $(A,B)$ maps to $(q^{2l})^{l-n}$, the partition with $(l-n)$ parts of the integer $2l$. Incrementing each $a_i$ by 2 for each $i\geq j$ corresponds to adding one part of $q^{2(l-j+1)}$ to the partition. Incrementing each $b_i$ by 2 for each $i \geq j$ corresponds to changing each part $q^{2l+b_i-2i+1}$ to $q^{2l+b_i-2i+3}$.

 To define the reverse map, we are given an integer $n\geq0$ 
 and a partition $\lambda$ consisting of all even parts and must construct $(A,B)$. We let 
 \[
  |\lambda|_{>j}
 \]
denote the number of parts of the partition $\lambda$ that are greater than $j$. Then the order $|A|=l$ is determined by $l=n+i$, where $i$ is the unique integer that satisfies both of the inequalities
 \[
 |\lambda|_{>2(n+i-1)} \geq i,  \,\,\,\,\,\,\, |\lambda|_{>2(n+i)} \leq i.
 \]
This ensures that there are at least $(l-n)$ parts of $\lambda$ that are at least equal to $2l$ and at most $(l-n)$ parts that are greater than $2l$. First, the set $B$ is determined by the $(l-n)$ greatest parts of $\lambda$, 
 and then the set $A$ is determined by the parts less than or equal to $2l$. $\square$
\end{proof} 

 The two sets $A$ and $B$ in the map (\ref{bijection}) correspond to two ways of building up partitions; incrementing entries in $A$ adds a certain part (which is $\leq 2l$)  to the partition, and incrementing entries in $B$ increases certain parts  (which are $\geq 2l$) that are already in the partition.   

\section{Graphs corresponding to Hirschhorn's Equation}

\subsection{The vertices of $G$} \label{vertices G}
From \cite{Hirschhorn}, 
we have 
Hirschhorn's equation:
\begin{align} 
& \prod_{n\geq 1} (1+(-aq^{\frac{1}{2}})(q^{\frac{1}{2}})^{2n-1})(1+(-aq^{\frac{1}{2}})^{-1}(q^{\frac{1}{2}})^{2n-1})(1-(q^{\frac{1}{2}})^{2n})\label{V_0 term}\\ 
&-  \prod_{n\geq 1} (1+(a^2q)(q^2)^{2n-1})(1+(a^2q)^{-1}(q^2)^{2n-1})(1-(q^2)^{2n}) \label{V_1 term}\\ 
&-  a^{-1} \prod_{n\geq 1} (1+(a^2q^{-1})(q^2)^{2n-1})(1+(a^2q^{-1})^{-1}(q^2)^{2n-1})(1-(q^2)^{2n}) \label{V_2 term}\\
=&0.
\end{align}
We express this equation as a perfect matching on an infinite graph $G$. The vertices in $V_0$ correspond to terms in the expansion of the product at \eqref{V_0 term}; the vertices in $V_1$ correspond to terms in the expansion of the product at \eqref{V_1 term}; and the vertices in $V_2$ correspond to terms in the expansion of the product at \eqref{V_2 term}.
\begin{definition}
The vertex set of $G$ is the disjoint union of three sets $V_0, V_1, V_2$. The vertices in each $V_i$ are indexed in the same manner by the ordered triple $(A,B,D)$ and $0\leq i\leq 2$:
\[
v = (A,B,D;i) \in V_i
\] 
where $A$ and $B$ are finite (possible empty) sets of distinct odd positive integers as above and $D$ is a finite (possible empty) set of distinct even positive integers; and $i$ indicates that $v \in V_i$.
We use the notation for the elements of $A$ and $B$ as above and let 
\[
D = \{d_1, d_2, ..., d_{|D|}\}.
\] 

Define $wt(v,a,q)$ by 
for $v\in V_0$
\[
wt(v,a,q) = (-1)^{|D|}(-a q^{\frac{1}{2}})^{|A|-|B|}A(q^{\frac{1}{2}})B(q^{\frac{1}{2}})D(q^{\frac{1}{2}}) ; 
\]
for $v \in V_1$ by
\[
wt(v,a,q) = - (-1)^{|D|}(a^{2}q)^{|A|-|B|}(q^2)^{|A|-|B|}A(q^2)B(q^2)D(q^2); 
\]
and for $v \in V_2$ by 
\[
wt(v,a,q) = \frac{ (-1)^{|D|}}{a} (a^{2}q^{-1})^{|A|-|B|}A(q^2)B(q^2)D(q^2); 
\]
$\square$
\end{definition}

\begin{definition}
Define a minimal set $\mathrm{MinSet}(n)$ for integer $n \geq 1$ by
\[
\mathrm{MinSet}(n)=\{ 1,3,5,...,2n-1\}.
\]
Define a vertex $v =(A,B,D;i) \in V_i$ to be terminal if either of the following hold for some integer $n \geq 1$: 
\begin{align*}
&1. \,\,A \text{ is minimal},\,\, B= \emptyset, \, \, D = \emptyset   \\ 
&2. \,\,A=\emptyset,\,\, B \text{ is minimal}, \, \, D = \emptyset   \\
&3. \,\,A=\emptyset, \,\,B= \emptyset,\,\, D=\emptyset. 
\end{align*}
$\square$
\end{definition}

\begin{definition} 
For a set $A$
\[
A=\{a_1,a_2,...,a_l\},
\]
define
\[
\mathrm{increment}(A,i) = \{a_1, a_2,..., a_{i-1}, a_i+2, a_{i+1}+2, ..., a_{l}+2\} 
\]
and 
\[
\mathrm{decrement}(A,i) = \{a_1, a_2,..., a_{i-1}, a_i-2, a_{i+1}-2, ..., a_{l}-2\}. 
\]
$\square$
\end{definition}

From proof above we have $\lambda(A,B) = \emptyset$ if and only if $(A,B)$ is of the form
\[
(A,B) = (\mathrm{MinSet}(n),\emptyset) \text{ or }  (\emptyset, \mathrm{MinSet}(n))
\] 
for some $n$. 

\subsection{The matching $\mathcal{J}$}
We define the perfect matching $\mathcal{J}$ on $G$.
\begin{definition} \label{J matching}
The perfect matching $\mathcal{J}$ is defined to be an involution on $V$. Let $v = (A,B,D;i)$. Let $\lambda=\lambda(A,B)$.  Assume $|A|\geq |B|$. 
Let $|A|=l$. For cases 1-4 below assume $\lambda$ and $D$ are not both empty. If $D$ is not empty, let $d_1$ denote the smallest integer in $D$. If $\lambda$ is not empty, let $\lambda_1$ denote the smallest integer in $\lambda$. 

\vspace{3mm}

\noindent1. If $\lambda$ is empty or if $d_1\leq\lambda_1$, and if $d_1 > 2l$, then that means $A = \mathrm{MinSet}(k)$ for some $k$. Define
\[
\mathcal{J}(v) = (\mathrm{MinSet}(k+1), B \cup \{ d_1-1-2l\}, D\backslash \{d_1\} ;i).
\] 
2. If $\lambda$ is empty or if $d_1 \leq\lambda_1$, and if $d \leq 2l$, then define 
\[
\mathcal{J}(v) = (\mathrm{increment}(A,\frac{2l-d_1+2}{2}), B, D\backslash \{d_1\} ;i).
\]
3. If $D$ is empty or if $d_1>\lambda_1$, and if $A = \mathrm{MinSet}(k)$ for some $k$, then define 
\[
\mathcal{J}(v) = (\mathrm{MinSet}(k-1), B \backslash \{ b_1\}, D\cup \{\lambda_1\} ;i).
\] 
4. If $D$ is empty or if $d_1>\lambda_1$, and if $A$ is not minimal, then define 
\[
\mathcal{J}(v) = (\mathrm{decrement}(A,\frac{2l-\lambda_1+2}{2}), B, D\cup \{\lambda_1\} ;i).
\]
5. If $\lambda$ and $D$ are both empty, then for integer $m \geq 0$ define 
\[
\mathcal{J}((\mathrm{MinSet}(2m), \emptyset,\emptyset;0)) =  (\mathrm{MinSet}(m), \emptyset,\emptyset;1),
\] 
\[
\mathcal{J}((\emptyset, \mathrm{MinSet}(2m),\emptyset;0)) = ( \emptyset,\mathrm{MinSet}(m),\emptyset;1), 
\]
\[
\mathcal{J}((\mathrm{MinSet}(2m-1), \emptyset,\emptyset;0)) = ( \mathrm{MinSet}(m),\emptyset,\emptyset;2), 
\]
and
\[
\mathcal{J}((\mathrm{MinSet}(2m), \emptyset,\emptyset;0)) =( \emptyset,\mathrm{MinSet}(m),\emptyset;2). 
\]
If $|B|>|A|$, then 
with 
\[
\mathcal{J}((B,A,D;i)) = (B',A',D';i')
\]
define 
\[
\mathcal{J}((A,B,D;i)) = (A',B',D';i').
\]
In cases 1 and 3, we say that $v$ is order-preserved since $|A|$ and $|B|$ are the same in $v$ and $\mathcal{J}(v)$. In cases 2 and 4, we say that $v$ is not order-preserved. 
$\square$
\end{definition}
\begin{lemma}\label{J is perfect}
The set of edges $\{v,\mathcal{J}(v)\}$ for $v\in V$ is a perfect matching and 
 \[
 wt(v,a,q) + wt(\mathcal{J}(v),a,q) = 0.
 \] 
\end{lemma}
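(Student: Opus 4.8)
The plan is to prove the two assertions separately: that $\mathcal{J}$ is a fixed-point-free involution (so the edges $\{v,\mathcal{J}(v)\}$ form a perfect matching), and that matched vertices carry opposite weights. Since $\mathcal{J}$ is declared to be an involution, the matching claim reduces to checking that every vertex falls into exactly one of the cases 1--5, that $\mathcal{J}(v)$ is again a legal vertex, that $\mathcal{J}(\mathcal{J}(v)) = v$, and that $\mathcal{J}(v) \neq v$. I would first record the structural facts extracted from the proof of the Jacobi Triple Product in Section~\ref{Triple Product}: the parts of $\lambda(A,B)$ contributed by $A$ are exactly even numbers in $\{2,4,\dots,2l\}$, those contributed by $B$ are $\ge 2l$, incrementing $A$ at position $\tfrac{2l-d+2}{2}$ adds a single part equal to $d$, and decrementing is its inverse. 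These facts make the cases disjoint and exhaustive: the dichotomy between cases 1--2 and cases 3--4 is decided by $d_1$ versus $\lambda_1$, and e.g.\ $d_1 \le \lambda_1$ together with $d_1 > 2l$ forces $\lambda$ to have no $A$-part, i.e.\ $A = \mathrm{MinSet}(k)$, which is precisely the hypothesis invoked in case 1. Fixed-point-freeness is immediate: cases 1--4 change $|D|$ by $\pm 1$ and case 5 changes $i$.

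The heart of the matching claim is that cases 1--2 (delete the smallest element of $D$ and grow $\lambda$) are inverse to cases 3--4 (delete the smallest part of $\lambda$ and grow $D$), with 1 paired to 3 and 2 paired to 4. In case 2, incrementing $A$ creates a new part of $\lambda' = \lambda(A',B)$ equal to $d_1 \le 2l$; because $d_1 \le \lambda_1$ this is the new smallest part, so $\lambda_1' = d_1$, the next element of $D' = D\setminus\{d_1\}$ (if any) exceeds it, and $A'$ is non-minimal, so $\mathcal{J}(v')$ lands in case 4 and decrements $A$ at position $\tfrac{2l-\lambda_1'+2}{2} = \tfrac{2l-d_1+2}{2}$, returning $v$. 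Case 1 is analogous: the new element $d_1-1-2l$ of $B$ is smaller than every original element of $B$, since $d_1 \le \lambda_1 = 2l+b_1-1$ forces $d_1-1-2l \le b_1-2$, so it is the distinguished $b_1$ removed by case 3, while $\lambda_1' = d_1$ is exactly the part returned to $D$. The extension to $|B| > |A|$ is governed by the stated $A \leftrightarrow B$ symmetry, itself an involution compatible with the construction, so it adds nothing new.

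Case 5 is the base of the recursion and the only place where $i$ changes. Here $\lambda$ and $D$ are both empty, so by the remark following the vertex definition the vertex is terminal, $(A,B) = (\mathrm{MinSet}(k),\emptyset)$ or $(\emptyset,\mathrm{MinSet}(k))$, and its weight is the single expansion term $z^k(q^{1/2})^{k^2}$ of \eqref{V_0 term} with $z = -aq^{1/2}$. I would evaluate the three products \eqref{V_0 term}, \eqref{V_1 term}, \eqref{V_2 term} in closed form via the Triple Product and observe that the even-exponent terms of \eqref{V_0 term} reproduce \eqref{V_1 term} under $k = 2m \mapsto m$, while the odd-exponent terms reproduce \eqref{V_2 term} under $k = 2m-1 \mapsto m$; this is exactly the pairing encoded in case 5. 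Verifying that each paired pair of monomials coincides up to sign is then a short computation using $(-a)^{2m} = a^{2m}$ and $(-a)^{2m-1} = -a^{2m-1}$, and I would record the four sub-cases uniformly, noting that the parity of $k = |A|-|B|$ selects whether the partner lies in $V_1$ or $V_2$.

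For the weight identity in cases 1--4 the decisive observation is invariance: each operation preserves $|A|-|B|$ and the total $\Sigma(A)+\Sigma(B)+\Sigma(D)$ — a part of size $d_1$ is merely transferred between $D$ and $\lambda$ — while changing $|D|$ by exactly one. Hence the monomial part of $wt(v,a,q)$ is unchanged in all three $V_i$ (the $a$- and $q$-powers depend only on $|A|-|B|$ and the transferred $\Sigma$), the factor $(-1)^{|D|}$ flips sign, and $wt(v)+wt(\mathcal{J}(v)) = 0$ follows. As the representative instance I would exhibit the case-2 identities $A'(q^{1/2}) = A(q^{1/2})\,q^{d_1/2}$ and $D'(q^{1/2}) = D(q^{1/2})\,q^{-d_1/2}$, whose product is $1$, and remark that cases 1, 3, 4 are identical after the $\Sigma$-bookkeeping. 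I expect the main obstacle to be the involution verification for cases 1--4, precisely the claim that after $\lambda$ grows, the comparison of $d_1$ with $\lambda_1$ and the minimality status of $A$ flip so as to route $\mathcal{J}(v)$ into the correct inverse case; this is the one point where the fine structure of the Triple Product bijection, rather than mere $\Sigma$-counting, is indispensable.
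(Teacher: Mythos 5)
Your proposal is correct and follows the same route as the paper, whose entire proof is the assertion that the involution property and the weight cancellation are ``straightforward to check'' from the proof of the Jacobi Triple Product; you supply exactly the details that assertion leaves implicit (the case-1/case-3 and case-2/case-4 pairings forced by which parts of $\lambda(A,B)$ come from $A$ versus $B$, the invariance of $|A|-|B|$ and of $\Sigma(A)+\Sigma(B)+\Sigma(D)$ combined with the $(-1)^{|D|}$ sign flip, and the even/odd split of the exponent in \eqref{V_0 term} for case 5). One caution: if you carry out your case-5 weight computation against the paper's literal definitions you will hit inconsistencies that must be typos --- the extra factor $(q^2)^{|A|-|B|}$ in the $V_1$ weight and the duplicated left-hand side in the fourth line of case 5 --- since with them the claimed cancellation fails, so your write-up should note and repair these rather than silently assume they cancel.
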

\begin{proof}
It is straightforward to check using the proof of the Jacobi Triple Product that $\mathcal{J}$ is an involution on $V$ and thus a perfect matching. It is also straightforward to check in each case that  
\[
wt(v,a,q) + wt(\mathcal{J}(v),a,q) = 0.
\]

\end{proof}
\subsection{The vertices of $G'$}
Now we differentiate Hirschhorn's equation with respect to $a$. This corresponds to creating a new graph $G'$ whose vertex set $V'$ is defined next. Differentiating by the product rule means each factor $a$ from the set $A$ and each $a^{-1}$ from $B$ contributes a separate vertex. That is why, for example, each element of  the set $A$ for vertices in $V_0$ contributes a vertex indexed by $j$; but for vertices in $V_1$ or $V_2$, each element $A$ comes with a power of $a^2$, which corresponds to two vertices indexed by $j$ and $j+1$. The $j=0$ index arises from the factor of $a^{-1}$ in the $wt$ function for $V_2$; this $a^{-1}$ does not come from $A$ or $B$. 

\begin{definition}
We define the vertex set $V'$ of $G'$ as a disjoint union of three sets $V_0', V_1',$ and $V_2'$. We say that a vertex $v \in V_i$ contributes a finite number of vertices of the form $v(j,+)$ or $v(j, -) \in V_i'$, where $j$ is an integer we specify below. 

Let 
\[
v= (A,B,D; 0) \in V_0.
\]
We denote 
\[
v(j,+) = (A,B,D; 0)(j,+) \in V_0'
\]
for $j \in A$, and 
\[
v(j,-) = (A,B,D; 0)(j,-) \in V_0'
\]
for $j\in B$. The vertex 
\[
(\emptyset, \emptyset, \emptyset;0) 
\]
does not contribute any vertices to $V_0'$.

Let 
\[
v= (A,B,D; 1) \in V_1.
\]
We denote
\[
v(j,+) = (A,B,D; 1)(j,+) \in V_1'
\]
for $j \in A$ or $j-1 \in A$, and   
\[
v(j,-) = (A,B,D; 1)(j,-) \in V_1'
\]
for $j \in B$ or $j-1 \in B$. The vertex 
\[
(\emptyset, \emptyset, \emptyset;1) 
\]
does not contribute any vertices to $V_1'$.

Let 
\[
v= (A,B,D; 2) \in V_2.
\]
We denote
\[
v(j,+) = (A,B,D; 2)(j,+) \in V_2'
\]
for $j \in A$ or $j-1 \in A$, and 
\[
v(j,-) = (A,B,D; 2)(j,-) \in V_2'
\]
for $j \in B$ or $j-1 \in B$, and also 
 \[
v(0) = (A,B,D; 2)(0) \in V_2'
\]
for any choice of $A$ and $B$.
Thus the vertex 
\[
(\emptyset, \emptyset, \emptyset;2) \in V_2
\]
contributes to $V_2'$ the vertex 
\[
(\emptyset, \emptyset, \emptyset;2)(0).
\]
$\square$ 
\end{definition}
We extend the weight function $wt$ to $V'$:
\begin{definition} 
For $v\in V$, define 
\[
wt'(v(j,+),q) = wt(v,1,q),
\] 
\[
wt'(v(j,-),q) = -wt(v,1,q).
\]
and 
\[
wt'(v(0),q)=-wt(v,1,q).
\]
\end{definition} 
\subsection{Extending $\mathcal{J}$ to $G'$}
We extend the perfect matching $\mathcal{J}$ to $G'$.
\begin{definition}

\hspace{1mm} \vspace{2 mm}

 $\mathbf{v\in V_0}$, \textbf{not terminal}. Suppose $v\in V_0$ is not terminal and is order-preserved. Without loss of generality suppose $A$ or $B$ gets incremented. Then if $B$ is the set that gets incremented, or if $A$ is the set that gets incremented and the incremented elements are all greater than $j$, then set 
\[
\mathcal{J}(v(j,+)) = \mathcal{J}(v)(j,+).
\] 
Otherwise set 
\[
\mathcal{J}(v(j,+)) = \mathcal{J}(v)(j+2,+).
\]
For  $v(j,-)$, apply the above definition with $A$ and $B$ switched.

Now suppose $v\in V_0$ is not terminal and is not order-preserved. Without loss of generality suppose $A=\mathrm{MinSet}(k)$ in $v$, and becomes $\mathrm{MinSet}(k-1)$ in $\mathcal{J}(v)$. Let $b_1$ be the smallest element of $B$. We set 
\[
\mathcal{J}(v(2k-1,+)) = v(b_1,-).
\]
Switching $A$ and $B$ in the above definition and letting $a_1$ denote the smallest element of $A$, we set 
\[
\mathcal{J}(v(2k-1,-)) = v(a_1,+).
\]
For all other cases we set 
\[
\mathcal{J}(v(j,+)) =  \mathcal{J}(v)(j,+) \text{ and } \mathcal{J}(v(j,-)) =  \mathcal{J}(v)(j,-).
\]

$\mathbf{v\in V_1 \cup V_2}$, \textbf{not terminal}. Suppose $v$ is not terminal and $v$ is order-preserved. Without loss of generality suppose $A$ or $B$ gets incremented. Then if $B$ is the set that gets incremented, or if $A$ is the set that gets incremented and the incremented elements are all greater than $j$, then set 
\[
\mathcal{J}(v(j,+)) = \mathcal{J}(v)(j,+).
\] 
Otherwise set 
\[
\mathcal{J}(v(j,+)) = \mathcal{J}(v)(j+2,+).
\]
For  $v(j,-)$, apply the above definition with $A$ and $B$ switched.

Suppose $v$ is not terminal and not order-preserved. Without loss of generality suppose $A=\mathrm{MinSet}(k)$ in $v$ and becomes $\mathrm{MinSet}(k-1)$ in $\mathcal{J}(v)$. Let $b_1$ be the smallest element of $B$. We set 
\[
\mathcal{J}(v(2k-1,+)) = v(b_1,-) \text{ and } \mathcal{J}(v(2k,+)) = v(b_1+1,-).
\]
Switching $A$ and $B$ in the above definition and letting $a_1$ denote the smallest element of $A$, we set 
\[
\mathcal{J}(v(2k-1,-)) = v(a_1,+) \text{ and } \mathcal{J}(v(2k,-)) = v(a_1+1,+).
\]
For all other cases we set 
\[
\mathcal{J}(v(j,+)) =  \mathcal{J}(v)(j,+) \text{ and } \mathcal{J}(v(j,-)) =  \mathcal{J}(v)(j,-).
\]
For $v \in V_2$ and $j=0$ with $v$ not terminal, we set 
\[
\mathcal{J}(v(0)) = \mathcal{J}(v)(0). 
\]

$\mathbf{v \in V_0}$ \textbf{is terminal}.

\vspace{1mm}
1.If $v \in V_0$ is terminal with $v = (\mathrm{MinSet}(2m), \emptyset, \emptyset)$, then recall $\mathcal{J}(v) \in V_1$ is terminal with $\mathcal{J}(v) = (\mathrm{MinSet}(m),\emptyset,\emptyset)$. Define
\[
\mathcal{J}(v(j,+)) = \mathcal{J}(v)(\frac{j+1}{2},+)\in V_1'.
\]
for $j$ odd, $1\leq j \leq 2(2m)-1$. 

2.If $v \in V_0$ is terminal with $v = (\emptyset, \mathrm{MinSet}(2m),\emptyset)$, then recall $\mathcal{J}(v) \in V_1$ is terminal with $\mathcal{J}(v) = (\emptyset, \mathrm{MinSet}(m),\emptyset)$. Define
\[
\mathcal{J}(v(j,-)) = \mathcal{J}(v)(\frac{j+1}{2},-)\in V_1'.
\]
for $j$ odd, $1\leq j \leq 2(2m)-1$. 

3.If $v \in V_0$ is terminal with $v = (\mathrm{MinSet}(2m-1), \emptyset, \emptyset)$, then recall $\mathcal{J}(v) \in V_2$ is terminal with $\mathcal{J}(v) = (\mathrm{MinSet}(m),\emptyset,\emptyset)$. Define
\[
\mathcal{J}(v(j,+)) = \mathcal{J}(v)(\frac{j+3}{2},+)\in V_2'.
\]
for $j$ odd, $1\leq j \leq 2(2m-1)m-1$, and define 
\begin{equation} \label{V2 j0 2m-1 pos}
\mathcal{J}((\mathrm{MinSet}(m), \emptyset,\emptyset;2)(0,+) )= \mathcal{J}(\mathrm{MinSet}(m), \emptyset,\emptyset;2)(1,+)
\end{equation}
where both sides of the above equation are elements of $V_2'$.

4. If $v \in V_0$ is terminal with $v = (\emptyset, \mathrm{MinSet}(2m-1), \emptyset)$, then recall $\mathcal{J}(v) \in V_2$ is terminal with $\mathcal{J}(v) = (\emptyset,\mathrm{MinSet}(m),\emptyset)$. Define
\[
\mathcal{J}(v(j,-)) = \mathcal{J}(v)(\frac{j-1}{2},-)\in V_2'.
\]
for $j$ odd, $1\leq j \leq 2(2m-1)m-1$.

Equation \eqref{V2 j0 2m-1 pos} describes the cancellation 
from the differentiation 
\[
a\frac{d}{da}(\frac{1}{a}) = -\frac{1}{a} \frac{d}{da}(a). 
\]
\end{definition}

\begin{lemma}\label{J' is perfect}
The set of edges $\{v',\mathcal{J}(v')\}$ for $v'\in V'$ is a perfect matching and 
 \[
 wt'(v',q) + wt(\mathcal{J}(v'),q) = 0.
 \] 
\end{lemma}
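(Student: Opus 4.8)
The plan is to prove the two assertions of the lemma separately: that the assignment $v'\mapsto\mathcal{J}(v')$ is a fixed-point-free involution on $V'$, which makes $\{v',\mathcal{J}(v')\}$ a perfect matching, and that the matched weights cancel. I would dispatch the weight identity first, as it is the easier half and reduces almost entirely to Lemma \ref{J is perfect}. The crucial observation is that $wt'$ depends on a vertex $v(j,s)$ only through the underlying $V$-vertex $v$ and the sign $s\in\{+,-\}$ (or through the marker $0$), not through the index $j$: we have $wt'(v(j,+),q)=wt(v,1,q)$, $wt'(v(j,-),q)=-wt(v,1,q)$, and $wt'(v(0),q)=-wt(v,1,q)$. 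Every edge of the extended matching is then of one of two types. In the first, $\mathcal{J}$ carries $v(j,s)$ to $\mathcal{J}(v)(j',s)$ with the same sign $s$ across a $\mathcal{J}$-matched pair $\{v,\mathcal{J}(v)\}$ of $V$ (the index $j'$ being $j$ or $j\pm2$); then the paired weights sum to $\pm\bigl(wt(v,1,q)+wt(\mathcal{J}(v),1,q)\bigr)$, which vanishes by Lemma \ref{J is perfect} at $a=1$. In the second, $\mathcal{J}$ pairs two contributions of the \emph{same} underlying vertex $v$ carrying opposite signs — as in the internal pairing $\mathcal{J}(v(2k-1,+))=v(b_1,-)$ and in the $V_2$ rule \eqref{V2 j0 2m-1 pos} — where the two weights are $wt(v,1,q)$ and $-wt(v,1,q)$ and cancel outright. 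Thus the weight identity requires no new computation beyond classifying each edge into these two types.

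The substance of the lemma is therefore the combinatorial claim that $\mathcal{J}$ is an involution on $V'$. I would organize this along the case division of the definition, verifying for each vertex $v'=v(j,s)$ (and each $v(0)$) both that $\mathcal{J}(v')$ is a legitimate element of $V'$ — its index lying in the range the contribution rule permits for the relevant $V_i'$ — and that $\mathcal{J}(\mathcal{J}(v'))=v'$. The point demanding care is that the rule invoked at $v$ and the rule invoked at $\mathcal{J}(v)$ are generally different cases of Definition \ref{J matching}, paired complementarily: the increment and decrement rules are mutually inverse, and the two $\mathrm{MinSet}$-resizing rules are mutually inverse. When $v$ and $\mathcal{J}(v)$ contribute the same number of vertices to $V'$ — the increment/decrement cases, where $|A|$ and $|B|$ are unchanged — every contribution pairs across the edge $\{v,\mathcal{J}(v)\}$ with its sign preserved, and the only thing to track is that the shift $j\mapsto j+2$ is applied exactly to the contributions indexed by the elements of $A$ whose values increase by $2$, so that the shift rules read from the two endpoints compose to the identity. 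When $v$ and $\mathcal{J}(v)$ contribute different numbers — the $\mathrm{MinSet}$-resizing cases — the larger endpoint carries exactly two surplus contributions, the one indexed by the dropped element of $A$ and the one indexed by the dropped element of $B$; I would check that these are precisely the two that the rule pairs internally, while the remaining contributions biject across the edge with sign preserved as before.

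The last and most delicate block is the terminal analysis, where a terminal $v\in V_0$ is matched to a terminal vertex of $V_1$ or $V_2$ and the contributions are redistributed through the affine maps $j\mapsto\tfrac{j+1}{2}$, $j\mapsto\tfrac{j+3}{2}$, and $j\mapsto\tfrac{j-1}{2}$. This is where I expect the main obstacle to lie. I would confirm that each such map is a bijection between the permitted index ranges on the two sides and that the counts agree: for example, the $2m$ contributions $v(j,+)$ with $j\in\mathrm{MinSet}(2m)$ of a terminal $V_0$ vertex must match the $2m$ contributions $w(k,+)$, $k\in\{1,\dots,2m\}$, of the terminal $V_1$ vertex $\mathcal{J}(v)=(\mathrm{MinSet}(m),\emptyset,\emptyset)$, and $j\mapsto\tfrac{j+1}{2}$ realizes this bijection. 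The genuine subtlety is the $V_2$ case: the terminal $V_2$ vertex $(\mathrm{MinSet}(m),\emptyset,\emptyset)$ contributes the extra vertex $v(0)$ coming from the $a^{-1}$ factor in its weight, and the map $j\mapsto\tfrac{j+3}{2}$ from the $V_0$ side covers only the indices $\{2,\dots,2m\}$, leaving $v(0)$ and $w(1,+)$ unmatched; these two are exactly the pair joined by \eqref{V2 j0 2m-1 pos}, which encodes the identity $a\frac{d}{da}(a^{-1})=-a^{-1}\frac{d}{da}(a)$, whereas in the companion $V_2$ case the vertex $v(0)$ is instead reached directly from the $V_0$ side. The crux is to verify that, across all four terminal configurations, these maps together with \eqref{V2 j0 2m-1 pos} leave no vertex of $V'$ unmatched and none matched twice; a useful global check is that the contributions of any $v\in V$ sum under $wt'$ to $a\frac{d}{da}wt(v,a,q)\big|_{a=1}$, which forces the counts. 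Once the involution is established on every case, the perfect-matching assertion follows, and combined with the weight identity from the first step the proof is complete.
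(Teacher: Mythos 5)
Your proposal is correct and follows essentially the same route as the paper, whose entire proof is the one-line remark that the lemma ``follows from the same properties for $\mathcal{J}$ on $V$''; you simply carry out in detail the case-by-case verification (weight cancellation via sign-preserving versus internally paired edges, inverse pairing of the increment/decrement and $\mathrm{MinSet}$-resizing rules, and the index bookkeeping in the terminal cases including \eqref{V2 j0 2m-1 pos}) that the paper leaves implicit.
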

\begin{proof}
This follows from the same properties for $\mathcal{J}$ on $V$. 
\end{proof}

We now define subsets of $V_i'$ that correspond to each term in the differentiation of the infinite products.
\begin{definition} 
For each odd integer $j>0$, define $V_0'(j,+) \subset V_0'$ to be set of a vertices in $V_0'$ of the form $v(j,+)$ and $V_0'(j,-) \subset V_0'$ to be set of a vertices in $V_0'$ of the form $v(j,-)$. 

For each odd integer $j>0$, define $V_1'(j,+) \subset V_1'$ to be set of a vertices in $V_1'$ of the form $v(j,+)$ or $v(j+1,+)$ and define $V_1'(j,-) \subset V_1'$ to be set of a vertices in $V_1'$ of the form $v(j,+)$ or $v(j+1,-)$. 

For each odd integer $j>0$, define $V_2'(j,+) \subset V_1'$ to be set of a vertices in $V_2'$ of the form $v(j,+)$ or $v(j+1,+)$ and define $V_2'(j,-) \subset V_1'$ to be set of a vertices in $V_2'$ of the form $v(j,-)$ or $v(j+1,-)$. Define $V_2'(0) \subset V_2'$ to be set of a vertices in $V_2'$ of the form $v(0)$. $\square$
\end{definition} 

\subsection{The matching $\mathcal{H}$}
\begin{definition}
We define the perfect matching $\mathcal{H}$ restricted to the set $V_0'\backslash V_0'(-1)$.
For a $j \neq -1$, let
\[ 
 v'=(A,B,D;0)(j) \in V_0'\backslash V_0'(-1)
\]
such that $1 \notin B$. Then define
\[
\mathcal{H}(v') = (A,B \cup \{1\},D;0)(j).
\]
$\square$
\end{definition}

\begin{lemma}\label{H is perfect}
The set of edges $\{v,\mathcal{H}(v)\}$ for $v\in V_0' \backslash V_0'(-1)$ is a perfect matching on that vertex set and 
 \[
 wt'(v,q) + wt'(\mathcal{H}(v),q) = 0.
 \] 
\end{lemma}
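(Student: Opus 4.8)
The plan is to show that $\mathcal{H}$, though defined above only in the direction $1\notin B \mapsto 1\in B$, is in fact a fixed-point-free involution on $V_0'\backslash V_0'(-1)$, and then to verify the weight identity by a direct monomial computation. Since a fixed-point-free involution on a set partitions it into the pairs $\{v',\mathcal{H}(v')\}$ with each vertex lying in exactly one pair, establishing these two facts immediately yields both assertions of the lemma.

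First I would make the involutive structure explicit. The operation $\mathcal{H}$ toggles membership of the part $1$ in $B$: if $1\notin B$ it sends $(A,B,D;0)(j)$ to $(A,B\cup\{1\},D;0)(j)$, and the inverse operation (which must coincide with $\mathcal{H}$) removes $1$ from $B$ when $1\in B$. Here I use the convention that $V_0'(-1)$ denotes the vertices carrying the label $(1,-)$, i.e.\ those of the form $v(1,-)$ with $1\in B$; these are exactly the vertices whose label would become meaningless if $1$ were removed from $B$, which is why they are excluded from the domain. For any label $(j,+)$ with $j\in A$, or $(j,-)$ with $j\in B$ and $j\neq 1$, the index $j$ continues to index an element of $A$ (resp.\ $B$) after $1$ is adjoined to or deleted from $B$, so the label survives, the toggled vertex lies in $V_0'$, and it again avoids $V_0'(-1)$. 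Applying the toggle twice restores $B$, so $\mathcal{H}^2=\mathrm{id}$; and since the set containing $1$ and the set not containing $1$ always differ, there are no fixed points.

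Next I would verify the weight identity by comparing $wt(w,1,q)$ and $wt(w^+,1,q)$ for $w=(A,B,D;0)$ and $w^+=(A,B\cup\{1\},D;0)$ with $1\notin B$. Adjoining the part $1$ to $B$ changes $|A|-|B|$ by $-1$ and multiplies $B(q^{1/2})$ by $q^{1/2}$, while leaving $(-1)^{|D|}$, $A(q^{1/2})$ and $D(q^{1/2})$ unchanged; the two contributions combine as $(-q^{1/2})^{-1}\cdot q^{1/2}=-1$, so $wt(w^+,1,q)=-wt(w,1,q)$. Because $\mathcal{H}$ preserves the label $(j)$, and $wt'$ attaches the same sign to $v'$ and $\mathcal{H}(v')$ through that common label, the relation $wt(w^+,1,q)=-wt(w,1,q)$ translates into $wt'(v',q)+wt'(\mathcal{H}(v'),q)=0$ in each of the two cases, label $(j,+)$ and label $(j,-)$.

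The only genuinely delicate point is the bookkeeping of labels under the toggle, specifically confirming that excluding precisely $V_0'(-1)$ (and nothing more) makes the toggle a bijection of the remaining vertex set onto itself: one must check both that no vertex outside $V_0'(-1)$ is sent into $V_0'(-1)$ (immediate, since the label is preserved and only the label $(1,-)$ lands there) and that every vertex outside $V_0'(-1)$ is in the image (immediate from involutivity). Once the correspondence between the $wt'$-signs and the labels is tracked carefully, the weight computation itself is routine.
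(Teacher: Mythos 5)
Your proposal is correct and follows essentially the same route as the paper's own (much terser) proof: the paper likewise observes that $\mathcal{H}$ is by definition an involution toggling the part $1$ in $B$, and that this toggle flips the sign while leaving the power of $q$ unchanged because the factor $(-q^{1/2})^{-|A|+|B|}$ and the factor $B(q^{1/2})$ change by $(-q^{1/2})^{-1}$ and $q^{1/2}$ respectively, i.e.\ ``the element $1$ of $B$ contributes $q$ to the $0$ power.'' Your additional bookkeeping of the labels $(j,\pm)$ under the toggle, and the identification of $V_0'(-1)$ with the vertices labelled $(1,-)$, correctly fills in details the paper leaves implicit.
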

\begin{proof}
By definition $\mathcal{H}$ is an involution on $ V_0' \backslash V_0'(-1)$. Since the sets of $v$ and $\mathcal{H}(v)$ differ by exactly one element they have different signs but are the same power of $q$ because by construction the element $1$ of $B$ in $V_0$ contributes $q$ to the $0$ power. $\square$
\end{proof}

For vertices in $V_0' \backslash V_0'(-1)$, we apply the matching $\mathcal{H}$ 
to get that 
\[
\sum_{v'\in V_0'} wt'(v',q)=\sum_{v'\in V_0'(-1)} wt'(v',q)=  \prod_{n=1}^\infty (1-q^{\frac{1}{2}} (q^{\frac{1}{2}})^{2n-1}) \prod_{n=2}^\infty (1-q^{-\frac{1}{2}} (q^{\frac{1}{2}})^{2n-1}) \prod_{n=1}^\infty (1- (q^{\frac{1}{2}})^{2n}).
\]

\subsection{The vertices of $G''$}
Now, in the differentiated Hirschhorn's equation, we set $a=1$ and multiply both sides by
\begin{align*}
&\left (\prod_{n=1}^\infty  (1+q^n)^{-1} (1+q^n)^{-1} (1+q^n)(1+q^n)\right)\\ 
&\times \left( \prod_{n=1}^\infty  (1+q^{4n-1})^{-1} (1+q^{4n-3})^{-1} (1+q^{2n})^{-1} (1+q^n)^{-1} (1-q^n)^{-1} \right)\\ 
&\times \left( \prod_{n=1}^\infty  ((1-q^n)(1-q^n)(1-q^n)(1-q^n) (1-q^n)^{-1}) (1-q^n)^{-1} (1-q^n)^{-1} (1-q^n)^{-1}\right)\\ 
=&\left (\prod_{n=1}^\infty  (\sum_{m=0}^\infty (-1)^m (q^n)^m)(\sum_{m=0}^\infty (-1)^m (q^n)^m ) (1+q^n)(1+q^n)\right)\\ 
&\times \left( \prod_{n=1}^\infty (\sum_{m=0}^\infty (-q^{4n-1})^m)(\sum_{m=0}^\infty (-q^{4n-3})^m)(\sum_{m=0}^\infty (-q^{2n})^m)(\sum_{m=0}^\infty (-q^{n})^m)(\sum_{m=0}^\infty (q^{n})^m)  \right)\\ 
&\times \left( \prod_{n=1}^\infty  ((1-q^n)(1-q^n)(1-q^n)(1-q^n)(\sum_{m=0}^\infty (q^n)^m))(\sum_{m=0}^\infty (q^n)^m))(\sum_{m=0}^\infty (q^n)^m))(\sum_{m=0}^\infty (q^n)^m))\right) 
\end{align*}
This corresponds to creating a new graph $G''$. 
In the above expression, consider the  first factor of $\displaystyle (\prod_{n=1}^\infty  (1+q^n)^{-1})$. Expanding each $(1+q^n)^{-1}$ as a geometric series yields a sum of terms. Each term corresponds to a choice of a partition. Call this choice $\mu_1$. Considering the second factor of $\displaystyle (\prod_{n=1}^\infty  (1+q^n)^{-1})$  yields a another choice, call it $\mu_2$. Similarly the first factor of  $\displaystyle (\prod_{n=1}^\infty  (1+q^n))$ yields a partition with distinct parts, call it $\nu_1$. In this way we index the terms in the expansion of the above expression by partitions.  
\begin{definition} 
We say that each vertex $v' \in V'$ contributes infinitely many vertices $v'' \in V''$ of the form 
\[
v'' = v' ( \mu_1, \mu_2, \nu_1, \nu_2, \xi_1,\xi_2, \xi_3, \xi_4,\xi_5,\pi_1, \pi_2, \pi_3, \pi_4, \rho_1,\rho_2, \rho_3, \rho_4)
\]
where  $\mu_1$ and $\mu_2$ are partitions; $\nu_1$ and $\nu_2$ are partitions with distinct parts; $\xi_1$ is a partition with parts congruent to $3 \mod 4$, $\xi_2$ is a partition with parts congruent to $1 \mod 4$,  $\xi_3$ is a partition with even parts, and $\xi_4$ and $\xi_5$ are partitions;  $\pi_k$ are partitions with distinct parts; and $\rho_k$ are partitions. 
Define
\[
V_i'' = \{v'': v' \in V_i' \}.
\]
We denote 
\[
 V_0''(-1) \subset V_0'' 
\]
to be the set of vertices in $V_0''$ that are contributed by vertices in $V_0'(-1) \subset V_0'$. 

Define 
\[
wt''(v'',q) = (-1)^{|\mu_1|+|\mu_2|+\sum_{k=1}^4 |\xi_k|+|\pi_k|} wt(v',1,q) \prod_{k=1}^2 \mu_k(q) \prod_{k=1}^2 \nu_k(q) \prod_{k=1}^5 \xi_k(q) \prod_{k=1}^4 \pi_k(q) \prod_{k=1}^4 \rho_k(q)  
\]
$\square$
\end{definition}

\subsection{Extending $\mathcal{J}$ to G''}
\begin{definition}
For $v'\ \in V'$, let $v'' = v'(\gamma)$ be a vertex that $v'$ contributes to $V''$ where $\gamma$ denotes a particular choice of 17 partitions in the definition of $V''$. Then define
\[
\mathcal{J}(v'(\gamma) =\mathcal{J}(v')(\gamma).
\]
\end{definition}
It follows that $\mathcal{J}$ is a perfect matching on $V''$ and that
\[
wt''(v'')+\mathcal{J}(v'')=0
\]
from those same properties for $V'$. 

\subsection{Operation of pairs of partitions}
\begin{definition}
We define operations $\mathrm{Reciprocal}$, $\mathrm{SqDiffDen}$, $\mathrm{SqDiffNum}$ and $\mathrm{EulerIdentity}$ on pairs of partitions $(\lambda,\mu)$. 

The operation $\mathrm{Reciprocal}(\lambda,\mu)$ corresponds to the cancellation in the equation 
\[
\left(\prod_{n=1}^\infty (1-x^n) \right) \left( \prod_{n=1}^\infty \sum_{m=0}^\infty (x^n)^m  \right)=1.
\]
 
$\mathbf{ \mathrm{Reciprocal}(\lambda,\mu)}$. For a partition $\lambda$ with distinct parts and for any partition $\mu$, suppose there exists a smallest index $n$ such that 
\[
\lambda(n) \neq 0 \text{ or } \mu(n) \neq 0.
\]
If $\lambda(n) \neq 0$, then let 
\[
\lambda' = \left(\prod_{i=1}^{n-1}(q^i)^{\lambda(i)} \right) (q^n)^{0}  \left(\prod_{i=n+1}^{\infty}(q^i)^{\lambda(i)} \right)
\]
\[
\mu' = \left(\prod_{i=1}^{n-1}(q^i)^{\mu(i)} \right) (q^n)^{\mu(n)+1}  \left(\prod_{i=n+1}^{\infty}(q^i)^{\mu(i)} \right).
\]
If $\lambda(n) = 0$, then let 
\[
\lambda' = \left(\prod_{i=1}^{n-1}(q^i)^{\lambda(i)} \right) (q^n)^{1}  \left(\prod_{i=n+1}^{\infty}(q^i)^{\lambda(i)} \right)
\]
and 
\[
\mu' = \left(\prod_{i=1}^{n-1}(q^i)^{\mu(i)} \right) (q^n)^{\mu(n)-1}  \left(\prod_{i=n+1}^{\infty}(q^i)^{\mu(i)} \right).
\]
The define 
\[
\mathrm{Reciprocal}(\lambda,\mu) = (\lambda',\mu').
\]
If no such $n$ exists, then we say that $\mathrm{Reciprocal}(\emptyset, \emptyset)$ is not defined. 

\vspace{1cm}
The operation $\mathrm{SqDiffDen}(\lambda,\mu)$,  corresponds to the cancellation in the equation 
\[
 \left( \prod_{n=1}^\infty \sum_{m=0}^\infty (x^n)^m  \right)\left( \prod_{n=1}^\infty \sum_{m=0}^\infty (-1)^m (x^n)^m  \right)= \left( \prod_{n=1}^\infty \sum_{m=0}^\infty (x^n)^{2m}  \right)
\]
which comes from have a difference of squares in the denominator.

$\mathbf{\mathrm{SqDiffDen}(\lambda,\mu)}$. For two partitions $\lambda$ and $\mu$, suppose there exists a smallest index $n$ such that either 

1. $\lambda(n)\neq 0$ and $\mu(n)$ is even. 

2. $\mu(n)$ is odd.  

\noindent If we have case 1, then let
\[
\lambda' = \left(\prod_{i=1}^{n-1}(q^i)^{\lambda(i)} \right) (q^n)^{\lambda(n)-1}  \left(\prod_{i=n+1}^{\infty}(q^i)^{\lambda(i)} \right)
\]
and
\[
\mu' = \left(\prod_{i=1}^{n-1}(q^i)^{\mu(i)} \right) (q^n)^{\mu(n)+1}  \left(\prod_{i=n+1}^{\infty}(q^i)^{\mu(i)} \right).
\]
\noindent If we have case 2, then let
\[
\lambda' = \left(\prod_{i=1}^{n-1}(q^i)^{\lambda(i)} \right) (q^n)^{\lambda(n)+1}  \left(\prod_{i=n+1}^{\infty}(q^i)^{\lambda(i)} \right)
\]
and
\[
\mu' = \left(\prod_{i=1}^{n-1}(q^i)^{\mu(i)} \right) (q^n)^{\mu(n)-1}  \left(\prod_{i=n+1}^{\infty}(q^i)^{\mu(i)} \right).
\]

Then define 
\[
\mathrm{SqDiffDen}(\lambda, \mu) = (\lambda',\mu').
\] 
\noindent If no such $n$ exists, then we say that $\mathrm{SqDiffDen}(\lambda, \mu)$ is  not defined.

\vspace{1cm}
The operation $\mathrm{SqDiffNum}(\lambda,\mu)$,  corresponds to the cancellation in the equation 
\[
 \left( \prod_{n=1}^\infty (1-q^n)\right)\left( \prod_{n=1}^\infty(1+q^n)\right)= \left( \prod_{n=1}^\infty (1-(q^n)^2)\right)
\]
which comes from have a difference of squares in the numerator.
$\mathbf{\mathrm{SqDiffNum}(\lambda,\mu)}$. Let $\lambda$ and $\mu$ be two partitions each having distinct parts. Suppose there exists a smallest index $n$ such that 
\[
\lambda(n) \neq \mu(n).
\]
Let 
\[
\lambda' = \left(\prod_{i=1}^{n-1}(q^i)^{\lambda(i)} \right) (q^n)^{\mu(n)}  \left(\prod_{i=n+1}^{\infty}(q^i)^{\lambda(i)} \right)
\]
and 
\[
\mu' = \left(\prod_{i=1}^{n-1}(q^i)^{\mu(i)} \right) (q^n)^{\lambda(n)}  \left(\prod_{i=n+1}^{\infty}(q^i)^{\mu(i)} \right).
\]
Define 
\[
\mathrm{SqDiffNum}(\lambda, \mu) = (\lambda',\mu').
\]
If no such $n$ exists then we say $\mathrm{SqDiffNum}(\lambda,\mu)$ is  not defined.

%
%

\vspace{1cm}
The operation $\mathrm{EulerIdentity}(\lambda,\mu)$, corresponds to the cancellation in the equation 
\[
 \left( \prod_{n=1}^\infty (1-q^n) \right)\left( \prod_{n=1}^\infty (1-q^{2n})^{-1} \right)=\left( \prod_{n=1}^\infty (1-q^{2n-1} \right)
\]
which is related to Euler's Identity of the number of partitions with distinct parts equaling the number of partitions with all odd parts.

$\mathbf{\mathrm{EulerIdentity}(\lambda,\mu)}$. Let $\lambda$ be a partition with distinct parts and let $\mu$ be a partition with all even parts.
Suppose there exists a smallest index $n$ such that 
\[
\lambda(2n) > 0 \text{ or }   \mu(2n) > 0.
\]
If $\lambda(2n) > 0$, then let 
\[
\lambda' =\left( \prod_{i=1}^{2n-1} (q^i)^{\lambda(i)}\right) (q^{2n})^{0}\left( \prod_{i=2n+1}^\infty (q^i)^{\lambda(i)}\right)
\]
and 
\[
\mu =\left( \prod_{i=1}^{2n-1} (q^i)^{\mu_K(i)}\right) (q^{2n})^{\mu(2n)+1}\left( \prod_{i=2n+1}^\infty (q^i)^{\mu(i)}\right).
\]
If $\lambda(2n) = 0$, then let 
\[
\lambda' = \left( \prod_{i=1}^{2n-1} (q^i)^{\lambda(i)}\right) (q^{2n})^{1}\left( \prod_{i=2n+1}^\infty (q^i)^{\lambda(i)}\right)
\]
and 
\[
\mu' = \left( \prod_{i=1}^{2n-1} (q^i)^{\mu(i)}\right) (q^{2n})^{\mu(2n)-1}\left( \prod_{i=2n+1}^\infty (q^i)^{\mu(i)}\right).
\]
Then set 
\[
\mathrm{EulerIdentity}(\lambda,\mu)=(\lambda',\mu').
\]
If there is no such $n$, then we say $\mathrm{EulerIdentity}(\lambda,\mu)$ is not defined. 
$\square$
\end{definition}

\subsection{The matching $\mathcal{T}$}
We describe a matching $\mathcal{T}$ on the set $V_0''(-1)$ that proves the equation
\[
\sum_{v'' \in V_0''(-1)} wt''(v'',q) = \theta_4(q)^2.
\]

\begin{definition}
Let 
\[
v'' = v'(\mu_1, \mu_2, \nu_1, \nu_2, \xi_1, \xi_2, \xi_3,\xi_4, \xi_5, \pi_1,\pi_2,\pi_3,\pi_4, \rho_1, \rho_2, \rho_3,\rho_4) \in V_0''(-1). 
\]
We take the union of $\xi_1, \xi_2$ and $ \xi_3$ and call it a single partition $\mu_3$,
and we rename the partition $\xi_4$ as $\mu_4$. Thus we have four partitions $\mu_k$ for $1\leq k \leq 4$. Now for each $k$ we sum over all partitions $\pi_k,\rho_k$ and $\mu_k$ which gives
\begin{equation} \label{Euler identity}
\prod_{n=1}^\infty (1-q^n)(1-q^n)^{-1} (1+q^n)^{-1}= \prod_{n=1}^\infty (1-q^{2n-1}).
\end{equation}
The above equation follows from Euler's identity equating the number of partitions with distinct parts and the number of partitions with odd parts. 
We define the matching $\mathcal{T}$ to correspond to equation \eqref{Euler identity}.

We take the smallest $K$, $1 \leq K \leq 4$, such that $\mathrm{SqDiffDen}(\rho_K, \mu_K) =(\rho_K', \mu_K')$ is defined. Let  
\[
v''((\rho_K , \mu_K)  \mapsto \mathrm{SqDiffDen}(\rho_K, \mu_K)  )
\]
denote the vertex obtained from $v''$ by replacing $\rho_K$ with $\rho_K'$ and $\mu_K$ with $\mu_K'$. Then we set 
\[
\mathcal{T}(v'') = v''((\rho_K , \mu_K)  \mapsto \mathrm{SqDiffDen}(\rho_K, \mu_K)  ).
\]

If no such $K$ exists, then that means $\rho_k$ is empty and $\mu_k$ is a partition with $\mu(n)$ even for all $n$ and for all $1\leq k \leq 4 $. In this case, we think of each $\mu_k$ as encoding a partition $\sigma_k$ of all even parts via 
\[
\sigma_k = \prod_{i=1}^\infty (q^{2i})^{\frac{\mu_k(i)}{2}}.
\] 
We thus compare $\sigma_k$ and $\pi_k$. Let $K$ be the smallest index $k$ such that $\mathrm{EulerIdentity}(\pi_K, \sigma_K)=(\pi_K', \sigma_K')$ is defined and denote
\[
\mu_K' = \prod_{i=1}^\infty (q^{i})^{2\sigma_K'(2i)}.
\]
Then we set
\[
\mathcal{T}(v'') = v''((\mu_K, \pi_K) \mapsto (\mu_K', \pi_K') ).
\]
If no such $K$ exists, then that means $\mu_k$ and $\rho_k$ are both empty and $\pi_k$ is a partition with distinct odd parts for $1 \leq k \leq 4$. 

Next we take $D$ which is a partition of distinct even parts, and and construct the partition $\frac{1}{2}D$ of distinct arbitrary parts via 
\[
\frac{1}{2}D = \{\frac{d_1}{2}, \frac{d_2}{2}, \frac{d_3}{2}, ..., \frac{d_{|D|}}{2} \}. 
\]
We compare $\frac{1}{2}D $ with the partition $\xi_5$. If $\mathrm{Reciprocal}(\frac{1}{2}D ,\xi_5) = (\frac{1}{2}D' ,\xi_5')$ is defined, then set 
\[
\mathcal{T}(v'') = v''((D,\xi_5)  \mapsto (D',\xi_5') ).
\]
If $\mathrm{Reciprocal}(\frac{1}{2}D ,\xi_5)$ is undefined, then that means both $D$ and $\xi_5$ are empty. 

Next we consider the partitions $A$ and $B$. We transform $A$ into another partition $\tau_1$ via 
\[
\tau_1 =  \prod_{i=1}^{|A|} (q^{\frac{a_i+1}{2}})^1
\]
and transform $B$ into $\tau_2$ via 
\[
\tau_2 =  \prod_{i=2}^{|B|} (q^{\frac{b_i-1}{2}})^1.
\]
Now let $K$ be the smaller index $1\leq K\leq 2$ such that $\mathrm{SqDiffNum}(\tau_K,\nu_K)$ is defined and set 
\[
\mathcal{T}(v'') = v''((\tau_K,\nu_K) \mapsto \mathrm{SqDiffNum}(\tau_K,\nu_K)).
\]
If no such $K$ exists, then that means $\tau_k = \nu_k$ for $1 \leq k \leq 2$ and we think of each pair $(\tau_k,\nu_k)$ as encoding a single partition $\phi_k$ of distinct even parts
\[
\phi_k = \prod_{i=2}^{\infty} (q^{2i})^{\nu(i)}. 
\]
Thus so far on the set $V_0''(-1)$ we have defined the matching $\mathcal{T}$ such that the only vertices $v''$ not in the matching are indexed by 
\[
v'' =( \pi_1,\pi_2, \pi_3,\pi_4,\phi_1,\phi_2 )
\]
such that the $\phi_k$ are partitions with distinct even parts and the $\pi_k$ are partitions with distinct odd parts. We rename these 
\[
(\alpha_1, \beta_1, \delta_1) = ( \pi_1,\pi_2, \phi_1 ) \text{ and } (\alpha_2, \beta_2, \delta_2) = ( \pi_3,\pi_4, \phi_2 ).
\]
Each $(\alpha_k, \beta_k, \delta_k)$ corresponds to a vertex in the original graph $G$. Let $K$ be the smaller index $1\leq K \leq 2$ such that $(\alpha_K, \beta_K, \delta_K)$ is not terminal, if such a $K$ exists. Then set 
\[
\mathcal{T}(v'') = v''((\alpha_K, \beta_K, \delta_K) \mapsto \mathcal{J}((\alpha_K, \beta_K, \delta_K)) ).
\]
This completes the definition of $\mathcal{T}$ on $V_0''(-1)$. 
$\square$
\end{definition}

\begin{lemma}
 The edges $\{  v, \mathcal{T}(v)\}$ for $v \in V_0''(1,-)$ are a matching on $V_0''(1,-)$ and the only vertices not in matching are terminal vertices of the form 
 \[
 v''  = ((\alpha_1, \beta_1, \delta_1) ,(\alpha_2, \beta_2, \delta_2) )
 \]
 in the notation of the definition of $\mathcal{T}$. Also 
 \[
 wt''(\mathcal{T}(v'')) + wt(v'') = 0
 \]
 \end{lemma}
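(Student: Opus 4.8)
The plan is to regard $\mathcal{T}$ as a \emph{priority composition} of five sign-reversing partial involutions applied to the data carried by a vertex $v''\in V_0''(-1)$ (which I identify with the set $V_0''(1,-)$ named in the statement): at top priority $\mathrm{SqDiffDen}$ on the pairs $(\rho_k,\mu_k)$, then $\mathrm{EulerIdentity}$ on $(\pi_k,\sigma_k)$, then $\mathrm{Reciprocal}$ on $(\tfrac12 D,\xi_5)$, then $\mathrm{SqDiffNum}$ on $(\tau_k,\nu_k)$, and finally $\mathcal{J}$ on the first non-terminal triple $(\alpha_K,\beta_K,\delta_K)$. For each $v''$ exactly one operation is invoked, on a uniquely determined index. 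I would then split the statement into three parts and reduce each to a local fact about the invoked operation: (i) $\mathcal{T}$ is an involution; (ii) its unmatched set is exactly the claimed normal form; (iii) $wt''(\mathcal{T}(v''))=-wt''(v'')$.

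First I would record the per-operation facts. Each of $\mathrm{Reciprocal}$, $\mathrm{SqDiffDen}$, $\mathrm{SqDiffNum}$, $\mathrm{EulerIdentity}$ selects the \emph{smallest} index $n$ meeting its trigger and edits its two partitions only at that single index, toggling between its two cases; hence each is an involution whose selected index is unchanged by one application, since the entries below $n$ are untouched and the entry at $n$ remains a trigger but in the opposite case. Moreover each corresponds, as in its defining display, to a cancellation in a product identity, so it preserves the total $q$-degree recorded by $wt''$ while reversing the overall sign: for $\mathrm{SqDiffDen}$ and $\mathrm{EulerIdentity}$ the flip is visible in the parity exponent of $wt''$ (the sizes $|\mu_K|$ resp. $|\pi_K|$ change by one), while for $\mathrm{Reciprocal}$ and $\mathrm{SqDiffNum}$ it is supplied by the $(-1)$-factors inside $wt(v',1,q)$. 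For the terminal layer, $\mathcal{J}$ applied to a non-terminal triple is an involution with $wt+wt\circ\mathcal{J}=0$ by Lemma~\ref{J is perfect}, and under the encoding $(\alpha_k,\beta_k,\delta_k)=(\pi_{2k-1},\pi_{2k},\phi_k)$ this flips the corresponding factor of $wt''$.

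The heart of the argument is the \emph{stability of the priority selection}: if $v''$ invokes the operation at priority $p$ and index $K$, then $\mathcal{T}(v'')$ invokes the same $p$ and $K$, so the local involution returns $v''$. This needs that the invoked operation, which edits only the data at level $p$, index $K$, leaves every higher-priority level globally inapplicable, every lower index at level $p$ inapplicable, and level $p$ index $K$ applicable. Most slots are disjoint and this is immediate; the delicate interactions are two. First, $\mathrm{EulerIdentity}$ edits $\sigma_K$ and hence $\mu_K$, but it changes $\mu_K(n)$ by $\pm 2$, so $\mu_K$ stays even-valued and $\mathrm{SqDiffDen}$ remains inapplicable. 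Second, $\mathcal{J}$ at level $5$ edits $\pi_{2K-1},\pi_{2K},\phi_K$, and I must check it does not reactivate levels $1$--$4$: $\mathcal{J}$ only increments or decrements the odd entries of $\alpha_K,\beta_K$ by $2$ or inserts/deletes an odd entry (the inserted value $d_1-1-2l$ is odd because $d_1\in\delta_K$ is even), so $\alpha_K,\beta_K$ stay distinct and odd and $\mathrm{EulerIdentity}$ stays inapplicable; $\phi_K'$ still encodes an equal pair $\tau_K'=\nu_K'$, so $\mathrm{SqDiffNum}$ stays inapplicable; and $D,\xi_5,\rho_k,\mu_k$ are untouched, keeping levels $1$--$3$ off. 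Finally, since the terminal triples are matched only among themselves by $\mathcal{J}$ (case~5), $\mathcal{J}$ preserves non-terminality, so $\mathcal{J}(\alpha_K,\beta_K,\delta_K)$ is again the first non-terminal triple and level $5$ reselects $K$. I expect this stability step, and in particular these two cross-slot interactions, to be the main obstacle.

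With stability in hand the three parts follow. Because the selected operation is a local involution that is reselected by $\mathcal{T}(v'')$, we get $\mathcal{T}\circ\mathcal{T}=\mathrm{id}$ wherever $\mathcal{T}$ is defined, giving the matching. Next, $\mathcal{T}(v'')$ is undefined exactly when all five triggers fail, which forces $\rho_k,\mu_k$ empty, each $\pi_k$ distinct odd, $D$ and $\xi_5$ empty, $\tau_k=\nu_k$ (so each $\phi_k$ has distinct even parts), and both $(\alpha_k,\beta_k,\delta_k)$ terminal, i.e. precisely the normal form $v''=((\alpha_1,\beta_1,\delta_1),(\alpha_2,\beta_2,\delta_2))$ asserted in the lemma. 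Lastly, the sign-reversal of the single invoked operation recorded above yields $wt''(v'')+wt''(\mathcal{T}(v''))=0$, completing the proof.
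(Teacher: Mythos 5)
Your proposal is correct and follows essentially the same approach as the paper, whose entire proof is the one-line assertion that the claim ``follows from the construction of $\mathcal{T}$ and the fact that the four operations on pairs of partitions are sign-changing involutions.'' You have simply made explicit the content behind that sentence --- in particular the stability of the priority selection and the two cross-slot interactions --- which the paper leaves unstated.
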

 \begin{proof}
This follows from the construction of $\mathcal{T}$ and the the fact that the four operations on pairs of partitions are sign-changing involutions. $\square$
 \end{proof}

The only vertices $v''$ not in the matching $\mathcal{T}$ are of the form 
\begin{equation} \label{v'' minimal}
v''  = ((\alpha_1, \beta_1, \delta_1) ,(\alpha_2, \beta_2, \delta_2) )
\end{equation}
such that each $(\alpha_k, \beta_k, \delta_k) $ is terminal. For $v''$ of the form \eqref{v'' minimal}, we have 
\[
wt''(v'',q) = wt((\alpha_1, \beta_1, \delta_1),1,-q) wt((\alpha_2, \beta_2, \delta_2),1,-q). 
\]
Since $\theta_4(q) = \theta_3(-q)$, this proves 
\[
\sum_{v'' \in V_0''(-1)} wt''(v'',q) = \theta_4(q)^2.
\]

\subsection{The matching $\mathcal{O}$}
We define a matching $\mathcal{O}$ on the vertices $V_1'' \cup V_2  \subset V''$. 
First we define the operation $\mathrm{Reciprocal}(\lambda,\mu; m)$. 
\begin{definition} 
Let $\lambda$ be a partition with distinct parts and let $\mu$ be any partition. Let $m$ be a positive integer.  
Suppose there exists a smallest index $n\neq m$ such that 
\[
\lambda(n) \neq 0 \text{ or } \mu(n) \neq 0.
\]
If $\lambda(n) \neq 0$, then let 
\[
\lambda' = \left(\prod_{i=1}^{n-1}(q^i)^{\lambda(i)} \right) (q^n)^{0}  \left(\prod_{i=n+1}^{\infty}(q^i)^{\lambda(i)} \right)
\]
\[
\mu' = \left(\prod_{i=1}^{n-1}(q^i)^{\mu(i)} \right) (q^n)^{\mu(n)+1}  \left(\prod_{i=n+1}^{\infty}(q^i)^{\mu(i)} \right).
\]
If $\lambda(n) = 0$, then let 
\[
\lambda' = \left(\prod_{i=1}^{n-1}(q^i)^{\lambda(i)} \right) (q^n)^{1}  \left(\prod_{i=n+1}^{\infty}(q^i)^{\lambda(i)} \right)
\]
and 
\[
\mu' = \left(\prod_{i=1}^{n-1}(q^i)^{\mu(i)} \right) (q^n)^{\mu(n)-1}  \left(\prod_{i=n+1}^{\infty}(q^i)^{\mu(i)} \right).
\]
The define 
\[
\mathrm{Reciprocal}(\lambda,\mu) = (\lambda',\mu').
\]
If no such $n$ exists, then we say that $\mathrm{Reciprocal}(\lambda, \mu; m)$ is not defined. 
\end{definition}

Now any vertex in $\in V_i''(j)$ for $i=1$ or $2$ is indexed by
\[
v'' = (A,B,D;i)(j,\pm)(\mu_1, \mu_2, \nu_1, \nu_2, \xi_1,\xi_2, \xi_3, \xi_4,\xi_5,\pi_1, \pi_2, \pi_3, \pi_4, \rho_1,\rho_2, \rho_3, \rho_4)
\]
where we recall that $A$ and $B$ are partitions of distinct odd parts; $D$ is a partition with distinct even parts; $\mu_1$ and $\mu_2$ are partitions; $\nu_1$ and $\nu_2$ are partitions with distinct parts; $\xi_1$ is a partition with parts congruent to $3 \mod 4$, $\xi_2$ is a partition with parts congruent to $1 \mod 4$,  $\xi_3$ is a partition with even parts, and $\xi_4$ and $\xi_5$ are partitions;  $\pi_k$ are partitions with distinct parts; $\rho_k$ are partitions; and $j$ or $j-1 \in A$ for $+$ and $j$ or $j-1 \in B$ for $-$, or possibly $j=0$ if $i=2$.

\begin{definition}
We define the matching $\mathcal{O}$ on $V_1'' \cup V_2  \subset V''$. 
First take $\xi_4$ and $\xi_5$
\[
\mathcal{O}(v'') = v''((\xi_4,\xi_5) \mapsto \mathrm{SqDiffDen}(\xi_4,\xi_5) ).
\]
If $\mathrm{SqDiffDen}(\xi_4,\xi_5)$ is not defined, then that means $\xi_4$ is empty and we may regard $\xi_5$ as a partition $\chi_1$ with all even parts. Then set 
\[
\mathcal{O}(v'') = v''((\xi_4,\xi_5) \mapsto \mathrm{SqDiffDen}(\xi_3,\chi_1) ).
\]
If $\mathrm{SqDiffDen}(\xi_3,\chi_1)$ is not defined, then that means $\xi_3$ is empty and we may regard $\chi_1$ as a partition $\chi_2$ with all parts congruent to $0 \mod 4$. We then set 
\[
\mathcal{O}(v'') = v''((\lambda,\mu) \mapsto \mathrm{Reciprocal}(\lambda,\mu) )
\] 
where $(\lambda,\mu)$ is the first pair in the following list such that $\mathrm{Reciprocal}(\lambda,\mu)$ is defined: 
\[
(2D,\chi_2), (\nu_1,\mu_1), (\nu_2,\mu_2), (\pi_1,\rho_1), (\pi_2,\rho_2), (\pi_3,\rho_3),(\pi_4,\rho_4).
\]
If $\mathrm{Reciprocal}(\lambda,\mu)$ is not defined for any pair in that list, then we take the partitions $A$ and $B$ and construct the partitions $A_i$ and $B_i$ depending on $i=1$ or $2$: 
\begin{align*}
&A \mapsto A_1&   B\mapsto B_1\\
&a_k \mapsto 2a_k+1&  b_k \mapsto 2b_k-1
\end{align*}
\begin{align*}
&A \mapsto A_2&   B\mapsto B_2\\
&a_k \mapsto 2a_k-1&  b_k \mapsto 2b_k+1
\end{align*}

Let $j' = \lceil \frac{j+1}{2} \rceil$. We then take $(\lambda,\mu)$ to be the first pair in the following lists for which the indicated operation is defined: 

\begin{align*}
i=1, j>0: & \,\,\,\mathrm{Reciprocal}(B_1, \xi_2), \mathrm{Reciprocal}(A_1, \xi_1; 4j'-1)\\
i=1, j<0: & \,\,\,\mathrm{Reciprocal}(A_1, \xi_1), \mathrm{Reciprocal}(B_1, \xi_2; 4j'-3)\\
i=2, j>0: & \,\,\,\mathrm{Reciprocal}(B_2, \xi_1), \mathrm{Reciprocal}(A_2, \xi_2; 4j'-3)\\
i=2, j<0: & \,\,\,\mathrm{Reciprocal}(A_2, \xi_2), \mathrm{Reciprocal}(A_2, \xi_1; 4j'-1)\\
i=2, j=0: & \,\,\,\mathrm{Reciprocal}(A_2, \xi_2), \mathrm{Reciprocal}(B_2, \xi_2).\\
\end{align*}
If none of the above operations are defined, then for $i=1$ that means either: $B$ and $\xi_2$ are empty, and $A_1$and $\xi_1$ are given by 
\begin{align*}
A_1 = (q^{4j-1})^1, \, \, \, \xi_1 = (q^{4j-1})^m
\end{align*}
for some $n$ and $m\geq0$; 
or $A$ and $\xi_1$ are empty, and $B_1$and $\xi_2$ are given by 
\begin{align*}
B_1 = (q^{4j-3})^1, \, \, \, \xi_1 = (q^{4j-3})^m
\end{align*}
for some $m\geq 0$. For $i=2, j \neq 0$ that means
\begin{align*}
A_2 = (q^{4j-3})^1, \, \, \, \xi_2 = (q^{4j-3})^m
\end{align*}
for some $m\geq0$; 
or $A$ and $\xi_2$ are empty, and $B_1$and $\xi_1$ are given by 
\begin{align*}
B_2 = (q^{4j-1})^1, \, \, \, \xi_2 = (q^{4j-1})^m
\end{align*}
for some $m\geq 0$. And for $i=2,j=0$ that means all $A,B, \xi_1,\xi_2$ are empty. 
$\square$
\end{definition}

\begin{lemma}
 The edges $\{  v, \mathcal{O}(v)\}$ for $v \in V_1'' \cup V_2''$ are a matching on those vertices and the only vertices not in matching are vertices indexed by 
 \[
 v  = (d,N; i,j,+) \text{ and }  v  = (d,N; i,j,-)
 \]
where we have  $d\equiv3$ mod 4 with $i=1$ and $+$, or $i=2$ and $-$; $d \equiv 1$ mod 4 with $i=2$ and $+$, or $i=1$ and $-$; and $j=d$ or $j=d+1$. These correspond to the two bits of information $\epsilon_1$ and $\epsilon_2$ in the sets $ \mathrm{Factors3mod4}(n)$ and $ \mathrm{Factors1mod4}(n)$ respectively. And 
 \[
 wt''(\mathcal{O}(v)) + wt''(v) = 0.
 \]
 \end{lemma}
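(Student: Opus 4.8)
The plan is to regard $\mathcal{O}$ as a cascade of sign-changing involutions applied in a fixed priority order, and then to invoke the general principle that such a cascade is itself a sign-changing involution whose fixed points are exactly the vertices on which no operation in the list is defined. Concretely, the definition of $\mathcal{O}$ prescribes an ordered list of operations — first $\mathrm{SqDiffDen}(\xi_4,\xi_5)$, then $\mathrm{SqDiffDen}(\xi_3,\chi_1)$, then the $\mathrm{Reciprocal}$ operations on the pairs $(2D,\chi_2),(\nu_1,\mu_1),\dots,(\pi_4,\rho_4)$, and finally the $\mathrm{Reciprocal}(\cdot,\cdot;m)$ operations on $A_i,B_i$ paired with $\xi_1,\xi_2$ — together with the reinterpretations $\xi_5 \mapsto \chi_1 \mapsto \chi_2$ and $A\mapsto A_i$, $B\mapsto B_i$, $D\mapsto 2D$ that are forced once the earlier operations fail. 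The value $\mathcal{O}(v'')$ is obtained by applying the first operation on this list that is defined at $v''$.

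To prove that the edges $\{v'', \mathcal{O}(v'')\}$ form a matching I would show $\mathcal{O}$ is an involution. Each of the maps $\mathrm{SqDiffDen}$, $\mathrm{SqDiffNum}$, $\mathrm{Reciprocal}$, $\mathrm{EulerIdentity}$, and the variant $\mathrm{Reciprocal}(\cdot,\cdot;m)$ is self-inverse wherever defined; this is immediate from their definitions, since each toggles a single decision (adding versus removing one unit of multiplicity at the minimal relevant index) that the map re-reads identically on its output. The only genuine content is the non-interference of the list: if $T_K$ is the first defined operation at $v''$, I must check that at $\mathcal{O}(v'')=w''$ the operations $T_1,\dots,T_{K-1}$ remain undefined and $T_K$ remains the first defined one, so that $\mathcal{O}(w'')$ re-selects $T_K$ and returns $v''$. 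Because $T_K$ alters only the pair of partitions on which it acts, and — in the reinterpretation steps — only at indices that cannot resurrect an earlier pair, the definedness of each earlier operation is unchanged; verifying this index-by-index, paying particular attention to the skipped index $m$ in $\mathrm{Reciprocal}(\cdot,\cdot;m)$, yields the involution property.

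Next I would verify $wt''(\mathcal{O}(v'')) + wt''(v'') = 0$. Each operation preserves the total monomial in $q$ attached to the pair it acts on — this is precisely the content of the product identities quoted alongside the operations (difference of squares in denominator and numerator, and the reciprocal cancellation $\prod_{n}(1-x^n)\prod_{n}\sum_{m}(x^n)^m=1$), and the reinterpretations $a_k\mapsto 2a_k\pm 1$ and $D\mapsto 2D$ are chosen exactly so that the shifted powers of $q$ match across the cancelling factors. At the same time each operation changes by one the parity of the exponent counting parts that enters the sign factor $(-1)^{|\mu_1|+|\mu_2|+\sum_{k=1}^{4}(|\xi_k|+|\pi_k|)}$ of $wt''$, while the $|A|,|B|,|D|$ data feeding $wt(v',1,q)$ contribute only powers of $q$ and no extra sign. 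Hence the two weights agree up to sign and cancel.

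Finally, for the fixed points I would run the cascade to exhaustion: failure of the $\mathrm{SqDiffDen}$ steps forces $\xi_4$, then $\xi_3$, empty and $\xi_5$ to reduce to a partition $\chi_2$ with parts $\equiv 0 \bmod 4$; failure of the $\mathrm{Reciprocal}$ list forces $D$ and all the $\mu_k,\nu_k,\rho_k,\pi_k$ and $\chi_2$ empty; and failure of the final $\mathrm{Reciprocal}(\cdot,\cdot;m)$ operations forces $A$ or $B$ down to a single part together with $\xi_1$ or $\xi_2$ equal to a pure power $(q^{d})^m$. Reading off the surviving data identifies each unmatched vertex with an element of $\mathrm{Factors3mod4}(n)$ or $\mathrm{Factors1mod4}(n)$: the single part gives the divisor $d$, whose residue mod $4$ is fixed by the combination of $i\in\{1,2\}$ and the sign $\pm$ exactly as in the statement, the multiplicity $m$ gives the complementary factor $N$, and the pair consisting of $\pm$ together with the choice $j=d$ or $j=d+1$ supplies the two bits $\epsilon_1,\epsilon_2$. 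The main obstacle is the non-interference bookkeeping of the second step: the several reinterpretations and the index-skipping in $\mathrm{Reciprocal}(\cdot,\cdot;m)$ must each be checked to preserve the priority order under the involution, and this case analysis — split by $i\in\{1,2\}$ and the sign of $j$ — is where the real work lies.
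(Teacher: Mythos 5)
Your overall strategy is exactly the paper's: the paper's proof of this lemma is the one-line observation that the claim ``follows from the construction of $\mathcal{O}$ and the fact that the four operations on pairs of partitions are sign-changing involutions,'' and your cascade-of-involutions framing, the non-interference check for the priority order, and the run-to-exhaustion identification of the unmatched vertices with $\mathrm{Factors1mod4}(n)$ and $\mathrm{Factors3mod4}(n)$ are precisely the details that one-liner is compressing. In that sense the proposal is a faithful (and considerably more explicit) expansion of the intended argument.

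There is, however, one concrete error in your sign bookkeeping. You assert that each operation flips the parity of the exponent in the explicit sign factor $(-1)^{|\mu_1|+|\mu_2|+\sum_{k=1}^{4}(|\xi_k|+|\pi_k|)}$, and that ``the $|A|,|B|,|D|$ data feeding $wt(v',1,q)$ contribute only powers of $q$ and no extra sign.'' For the step $\mathrm{Reciprocal}(2D,\chi_2)$ this accounting fails: the operation moves a part between $D$ and $\xi_5$, and \emph{neither} $|D|$ nor $|\xi_5|$ appears in that explicit sign factor (the sum runs only over $\xi_1,\dots,\xi_4$). Under your stated rule this step would preserve the sign and the cancellation $wt''(\mathcal{O}(v''))+wt''(v'')=0$ would fail there. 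The sign does in fact flip, but for a different reason: $wt(v',1,q)$ inherits the factor $(-1)^{|D|}$ from the definition of $wt(v,a,q)$, so changing $|D|$ by one supplies the missing sign. (Your claim is fine for the remaining pairs on $V_1''\cup V_2''$, since the weights there carry no $|A|$- or $|B|$-dependent sign at $a=1$, and in each other pair exactly one member's part-count enters the explicit sign factor.) You should correct that one step; with it the argument goes through as the paper intends.
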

 \begin{proof}
This also follows from the construction of $\mathcal{O}$ and the the fact that the four operations on pairs of partitions are sign-changing involutions. $\square$
 \end{proof}

\section{Paths in Graphs}
Now we can specify the constructive proof. 
We have that he vertices $v \in V_1'' \cup V_2''$ that are not a part of the matching $\mathcal{O}$ correspond to vertices in $ \mathrm{Factors3mod4}(n)$ and $ \mathrm{Factors1mod4}(n)$ and that vertices in $V_0''(1,-)$ that are not a part of the $\mathcal{T}$ matching correspond to the set $ \mathrm{SquarePairs}(n)$. 
We check that the sign of $wt''(v)$ is the same for all $v \in  \mathrm{Factors1mod4}(n)$ and opposite to all those for $v \in  \mathrm{Factors3mod4}(n)\cup \mathrm{SquarePairs}(n)$. We have also shown that the matchings preserve the power of $q$ in the $wt''$ function but change its sign. There facts imply that any path $P$ in $G''$ of the form 
\[
\left( \prod_{k=1}^\infty (\mathcal{J}\mathcal{H})^{m_{3,k}}(\mathcal{J}\mathcal{O})^{m_{2,k}}(\mathcal{J}\mathcal{T})^{m_{1,k}} \right)\mathcal{J}(v)
\]
where $v \in  \mathrm{Factors1mod4}(n)$ for some $n \geq 1$ and the $m_{i,k}$ are non-negative integers is uniquely determined and has its final vertex in $ \mathrm{Factors3mod4}(n)\cup  \mathrm{SquarePairs}(n)$.

\section{Examples}\label{examples}

We present some information about paths for some values of $n$.
When $n=1$, we have four paths which are given in the following table. Since every path $P$ has every other edge in the $\mathcal{J}$ matching, we have omitted those in the table for brevity. 
\begin{table}[H]\caption{Paths for $n=1$}
\centering 
\begin{tabular}{c c c }
\hline \hline                        
1 mod 4 Start vertex &End vertex  &  Edge Pair Path Sequence \\ [0.5ex]
\hline                  
(1,1;1,1,-) & (-1,0) & $\{ \mathcal{T},\mathcal{O}, \mathcal{T},\mathcal{O} \}$  \\
(1,1;1,2,-)& (0,-1)  &$\{ \mathcal{H},\mathcal{T}, \mathcal{O},\mathcal{T}\}$\\
(1,1;2,1,+)  & (0,1)   &$\{ \mathcal{O}, \mathcal{T},\mathcal{O}\}$\\
(1,1;2,2,+)  & (1,0) & $\{ \mathcal{H},\mathcal{T}, \mathcal{O},\mathcal{T},\mathcal{O} \}$\\ [1ex]      
\hline
\end{tabular}\label{table:nonlin}
\end{table}

Below is the table for $n=9$. There are $4(2-1)=4$ ways to write $9$ as a sum of two squares. 
\begin{table}[H]\caption{Paths for $n= 9$}
\centering 
\begin{tabular}{c c c }
\hline \hline                        
1 mod 4 Start vertex &End vertex  & Number of Edge Pairs in Path \\ [0.5ex]
\hline                  
(9,1;1,5,-) & (0,3) & 1,107   \\
(9,1;1,6,-) & (0,-3)  & 6,614\\
(9,1;2,5,+)  & (-3,0)   & 20,638\\
(9,1;2,6,+)  & (3,3;2,2,-) & 15,088\\
(1,9;1,1,-)  & (3,3;2,1,-)  &19,038\\
(1,9;1,2,-)  & (3,0)  & 80,431\\
(1,9;2,1,+)  & (3,3;1,1,+)   &134,951 \\
(1,9;2,1,+)  & (3,3;1,2,+)  & 15,613 \\ [1ex]      
\hline
\end{tabular}\label{table:nonlin}
\end{table}

\section{Further Work} 

\begin{itemize} 

\item The Jacobi Triple Product is an instance of the Macdonald identities. Extend the the proof strategy of Section \ref{Triple Product} to the other Macdonald identities and also see if analogues of the Lambert series \eqref{Lambert} can be obtained. 

\item Analyze the paths in $G''$ more closely and see if they can be expressed using a smaller number of edges by modifying the algorithm $P$. 

\item Use these graphs of partitions to get constructive proofs of the generalizations to the Jacobi sum of two square formula found in \cite{Lass} and \cite{Milne}.

\item Compare the algorithm $P$ to other constructive proofs of two square theorem for primes $p\equiv 1 \mod4$ found in \cite{Brillhart}, \cite{Delorme}, \cite{Elsholtz}, and \cite{Jacobstahl}. 

\end{itemize}


\begin{thebibliography}{9}

\bibitem{Borwein} Borwein, Jonathan M. and Borwein, Peter B.  Pi and the AGM. John Wiley and Sons. New York, 1987. 

\bibitem{Brillhart} Brillhart, John. ``Note on Representing a Prime as a Sum of Two Squares". Mathematics of Computation, Volume 26, Number 120 October 1972. 

\bibitem{Delorme} Delorme, Charles and Pineda-Villavicencio, Guillermo. ``Continuants and Some Decompositions Into Squares." Integers, Vol 15 (2015)

\bibitem{Elsholtz} Elsholtz, Christian. ``A combinatorial approach to sums of two squares and related problems." http://www.math.tugraz.at/~elsholtz/WWW/papers/papers30nathanson-new-address3.pdf 

\bibitem{Jacobi} Jacobi,C.G.J. ``Fundamenta Nova Theoriae Functionum Ellipticarum," Regiomonti. Sumptibus fratrum Bornträger, 1829; reprinted in Jacobi's Gesammelte Werke, Vol. 1, Reimer, Berlin, 1881-1891, pp. 49-239; reprinted by Chelsea, New York, 1969; Now distributed by The American Mathematical Society, Providence, RI

\bibitem{Jacobstahl}  Jacobstahl, Ernst. ``Uber die Darstellung der Primzahlen der Form 4n+1 als Summe zweier Quadrate."
Journal für die reine und angewandte Mathematik / Zeitschriftenband, (1907) Artikel 238-245
http://www.digizeitschriften.de/dms/resolveppn/?PID=GDZPPN002166402

\bibitem{Hirschhorn} Hirschhorn, Michael D. ``A simple proof of Jacobi's two-square theorem," Amer. Math Monthly 92 (1985) 579-580.

\bibitem{Lass}Lass, Bodo. ``Demonstration de la conjecture de Dumont," C. R. Acad. Sci. Paris, Ser. I 341 (2005) 713-718.

\bibitem{Milne} Milne, Stephen S. ``New infinite families of exact sums of squares formulas, Jacobi elliptic functions, and Ramanujan's tau function," PNAS December 24, 1996 93 (26) 15004-15008


\end{thebibliography}
\end{document}